\documentclass[11pt]{article}
\usepackage{amsthm,geometry,amssymb,amsmath,enumerate,float,tikz,cite,algorithm2e,setspace}
\geometry{a4paper,left=2cm,right=2cm, top=2cm, bottom=3cm}
\pdfoutput=1
 
\newtheorem{theorem}{Theorem}[section]
\newtheorem{lemma}{Lemma}[section]

\usepackage{lineno}

\title{Relating broadcast independence and independence }
\author{S. Bessy$^1$ \and  D. Rautenbach$^2$}
\date{}

\begin{document}
\onehalfspace

\maketitle
\vspace{-10mm}
\begin{center}
{\small $^1$ Laboratoire d'Informatique, de Robotique et de
  Micro\'{e}lectronique de Montpellier,\\ Montpellier, France,
  \texttt{stephane.bessy@lirmm.fr}\\[3mm] $^2$ Institute of
  Optimization and Operations Research, Ulm University,\\ Ulm,
  Germany, \texttt{dieter.rautenbach@uni-ulm.de}}
\end{center}

\begin{abstract}
An independent broadcast on a connected graph $G$ is a function
$f:V(G)\to \mathbb{N}_0$ such that, for every vertex $x$ of $G$, the
value $f(x)$ is at most the eccentricity of $x$ in $G$, and $f(x)>0$
implies that $f(y)=0$ for every vertex $y$ of $G$ within distance at
most $f(x)$ from $x$.  The broadcast independence number $\alpha_b(G)$
of $G$ is the largest weight $\sum\limits_{x\in V(G)}f(x)$ of an
independent broadcast $f$ on $G$.  Clearly, $\alpha_b(G)$ is at least
the independence number $\alpha(G)$ for every connected graph $G$.  Our main result
implies $\alpha_b(G)\leq 4\alpha(G)$.  We prove a tight inequality and
characterize all extremal graphs.
\end{abstract}
{\small 
\begin{tabular}{lp{13cm}}
{\bf Keywords}: & broadcast independence; independence
\end{tabular}
}

\pagebreak

\section{Introduction}

In his PhD thesis \cite{er} Erwin introduced the notions of broadcast
domination and broadcast independence in graphs, cf. also
\cite{duerhahehe}.  While broadcast domination was studied in detail
\cite{brmyte,helo,hemy,lumy,se,soko}, only little research exists on
broadcast independence \cite{ahboso,boze}.  In the present paper we
relate broadcast independence to ordinary independence in graphs; one
of the most fundamental and well studied notions in graph theory.

We consider finite, simple, and undirected graphs, and use standard
terminology and notation.  Let $\mathbb{N}_0$ be the set of
nonnegative integers.  For a connected graph $G$, a function
$f:V(G)\to \mathbb{N}_0$ is an {\it independent broadcast on $G$} if
\begin{quote}
\begin{enumerate}[(B1)]
\item $f(x)\leq {\rm ecc}_G(x)$ for every vertex $x$ of $G$, where
  ${\rm ecc}_G(x)$ is the eccentricity of $x$ in $G$, and
\item ${\rm dist}_G(x,y)>\max\{ f(x),f(y)\}$ for every two distinct
  vertices $x$ and $y$ of $G$ with $f(x),f(y)>0$, where ${\rm
    dist}_G(x,y)$ is the distance of $x$ and $y$ in $G$.
\end{enumerate}
\end{quote}
The {\it weight} of $f$ is $\sum\limits_{x\in V(G)}f(x)$.  The {\it
  broadcast independence number} $\alpha_b(G)$ of $G$ is the maximum
weight of an independent broadcast on $G$, and an independent
broadcast on $G$ of weight $\alpha_b(G)$ is {\it
  optimal}.\footnote{Note that, for a disconnected graph $G$, (B1) and
  (B2) allow to assign an arbitrarily large value to one vertex in
  each component of $G$, which means that the weight of independent
  broadcasts on $G$ would be unbounded.  To avoid this issue, ${\rm
    ecc}_G(x)$ in (B1) could be replaced by the eccentricity of $x$ in
  the connected component of $G$ that contains $x$.}  Let $\alpha(G)$
be the usual independence number of $G$, that is, $\alpha(G)$ is the
maximum cardinality of an independent set in $G$, which is a set of
pairwise nonadjacent vertices of $G$.  For an integer $k$, let $[k]$
be the set of all positive integers at most $k$, and let $[k]_0=\{
0\}\cup [k]$.

Clearly, assigning the value $1$ to every vertex in an independent set
in some connected graph $G$, and $0$ to all remaining vertices of $G$, yields an
independent broadcast on $G$, which implies
$$\alpha_b(G)\geq \alpha(G)\mbox{ for every connected graph $G$}.$$
A consequence of our main result is that
$$\alpha_b(G)\leq 4\alpha(G)\mbox{ for every connected graph $G$}.$$ The fact
that the broadcast independence number and the independence number are
within a constant factor from each other immediately implies the
computational hardness of the broadcast independence number, and also
yields efficient constant factor approximation algorithms for the
broadcast independence number on every class of graphs for which the
independence number can efficiently be approximated within a constant
factor.

In order to phrase our main result, we introduce some special graphs.
For a positive integer $k$, a graph $H$ is a {\it $k$-strip with
  partition $(B_0,\ldots,B_k)$} if $V(H)$ can be partitioned into $k$
nonempty cliques $B_0,\ldots,B_k$ such that
\begin{itemize}
\item $B_0$ contains a unique vertex $x$,
\item all vertices in $B_i$ have distance $i$ in $H$ from $x$, and
\item $B_i$ is completely joined to $B_{i+1}$ 
for every even index $i$ in $[k-1]_0$.
\end{itemize}
For a positive integer $k$, let ${\cal G}_2(k)$ be the class of all
connected graphs that arise from the disjoint union of two
$(2k+1)$-strips $H_1$ with partition $(B_0^1,\ldots,B_{2k+1}^1)$ and
$H_2$ with partition $(B_0^2,\ldots,B_{2k+1}^2)$ by adding some edges
between $B_{2k+1}^1$ and $B_{2k+1}^2$. 
An example of such a graph is depicted in Figure~\ref{fig:GraphG2}.
\begin{figure}[H]
\centering
\includegraphics[width=0.9\textwidth]{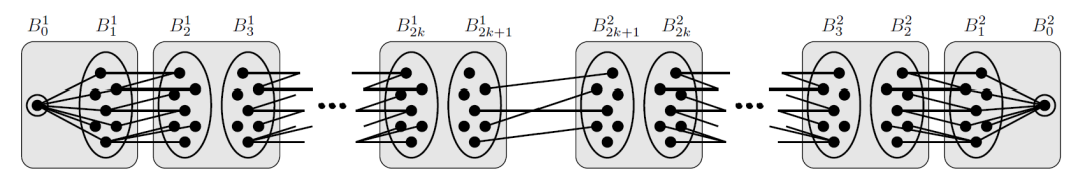}
\caption{A graph from the family ${\cal G}_2(k)$. 
The vertices in each gray box form a clique.}
\label{fig:GraphG2}
\end{figure}
For positive integers $k$ and $\ell$ with $\ell\geq 2$, let ${\cal
  G}_0(k,\ell)$ be the class of all graphs that arise from the
disjoint union of $\ell$ $2k$-strips 
$H_1,\ldots,H_{\ell}$,
where $H_i$ has partition $(B_0^i,\ldots,B_{2k}^i)$ for $i$ in $[\ell]$,
and a possibly empty set $R$ of vertices
by adding all possible edges within $R\cup
\bigcup\limits_{i=1}^{\ell}B_{2k}^i$. 
A graph from the family ${\cal G}_0(k,\ell)$ 
is depicted in Figure~\ref{fig:GraphG0}.

\begin{figure}[H]
\centering
\includegraphics[width=0.8\textwidth]{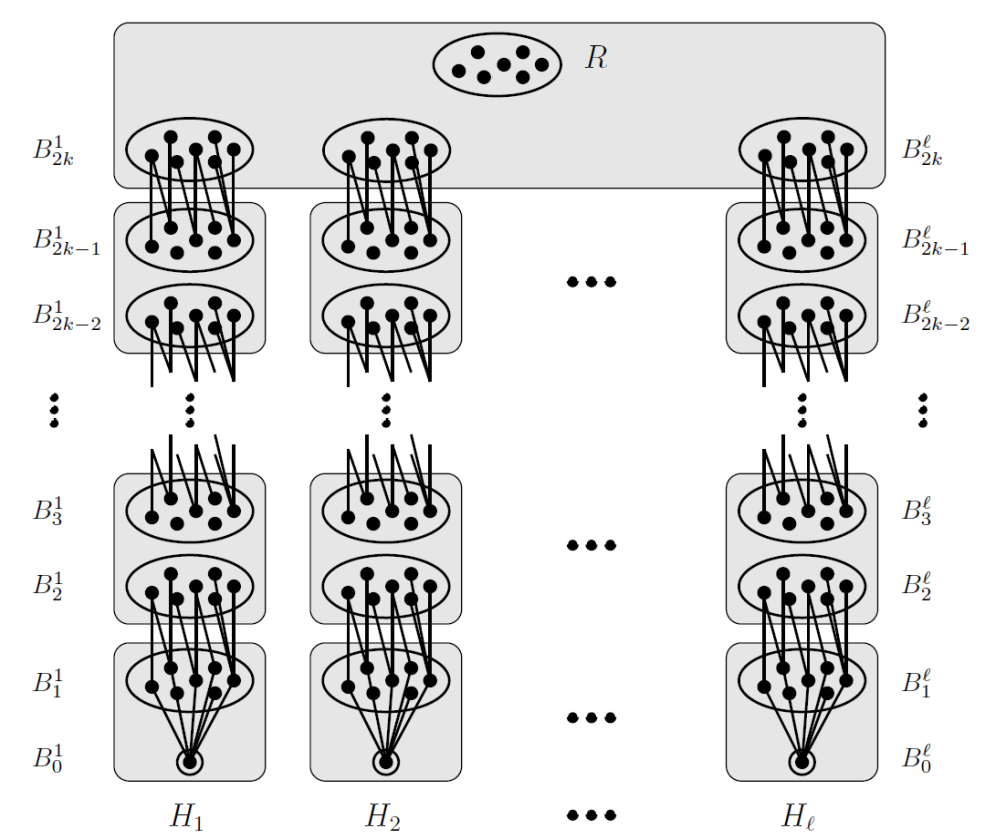}
\caption{A graph from the family ${\cal G}_0(k,\ell)$. 
Also here, the vertices in each gray box form a clique.}
\label{fig:GraphG0}
\end{figure}
Finally, let
$${\cal G}_2=\bigcup\limits_{k\geq 1}{\cal G}_2(k)
\,\,\,\,\,\,\,\,\,\,\mbox{ and }\,\,\,\,\,\,\,\,\,\, {\cal
  G}_0=\bigcup\limits_{k\geq 1}\bigcup\limits_{\ell\geq 2}{\cal
  G}_0(k,\ell).$$
The following is our main result; proofs are given
in the following section.

\begin{theorem}\label{theorem1}
If $G$ is a connected graph such that $G$ has diameter at least $3$ or
$\alpha(G)\geq 3$, and $f$ is an optimal broadcast on $G$, then
\begin{eqnarray}
\alpha_b(G)&\leq & 4\alpha(G)-4\min\left\{1,
\frac{2\alpha(G)}{f_{\max}+2}\right\},\label{e2}
\end{eqnarray}
where $f_{\max}=\max\{ f(x):x\in V(G)\}$.  Equality holds in
(\ref{e2}) if and only if $G\in {\cal G}_0\cup {\cal G}_2$.
\end{theorem}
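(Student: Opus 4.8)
The plan is to turn the optimal broadcast $f$ into a large independent set, read off the base inequality $\alpha_b\le 4\alpha$, then squeeze out the correction term by accounting for the slack, and finally obtain the equality characterization by tracking exactly when every estimate is simultaneously tight. Let $x_1,\dots,x_s$ be the support of $f$ (the vertices with $f(x_i)>0$), write $a_i=f(x_i)$, order $a_1\ge\cdots\ge a_s$, so that $\alpha_b(G)=W:=\sum_i a_i$ and $f_{\max}=a_1$. By (B1) we have $a_i\le {\rm ecc}_G(x_i)$, so for each $i$ I fix a shortest path $P_i=(x_i=u^i_0,u^i_1,\dots,u^i_{a_i})$ of length exactly $a_i$, and on $P_i$ I select the vertices at even distance from $x_i$ up to distance $\lfloor(a_i-1)/2\rfloor$. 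Since $P_i$ is a geodesic, these selected vertices are pairwise at distance at least $2$; calling this set $I_i$, a short count gives $|I_i|=\lfloor\lfloor(a_i-1)/2\rfloor/2\rfloor+1\ge a_i/4$, with equality if and only if $4\mid a_i$.

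The crucial point is that $I:=\bigcup_i I_i$ is independent in $G$, and this is exactly where (B2) enters. For $u\in I_i$ and $v\in I_j$ with $i\ne j$ and $a_i\ge a_j$, property (B2) yields ${\rm dist}_G(x_i,x_j)\ge a_i+1$, so the triangle inequality gives ${\rm dist}_G(u,v)\ge(a_i+1)-\lfloor(a_i-1)/2\rfloor-\lfloor(a_j-1)/2\rfloor\ge 2$, using $a_j\le a_i$. Hence no edge joins $I_i$ to $I_j$, and summing $|I_i|\ge a_i/4$ produces $\alpha(G)\ge|I|\ge W/4$, i.e. $\alpha_b(G)\le 4\alpha(G)$. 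The capping at $\lfloor(a_i-1)/2\rfloor$ (rather than $\lfloor a_i/2\rfloor$) is precisely what upgrades the separation from distance $\ge 1$ to distance $\ge 2$, and the delicate point is that this must be done without losing more than one selected vertex, since the extremal count must be preserved.

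To sharpen $W\le 4\alpha$ to \eqref{e2} I use two independent sources of slack. First, whenever some $a_i\not\equiv 0\pmod 4$ one has $|I_i|>a_i/4$; this is the slack realised by $\mathcal{G}_2$, where every $a_i\equiv 2\pmod 4$. Second, the vertices of $P_i$ lying beyond distance $\lfloor(a_i-1)/2\rfloor$, together with any vertex far from all of $x_1,\dots,x_s$, can be appended to $I$ without creating an edge; this is the slack realised by $\mathcal{G}_0$, where the common terminal clique contributes one extra independent vertex. Comparing the better of these bonuses against $f_{\max}$ yields $4\alpha(G)-W\ge 4\min\{1,2\alpha(G)/(f_{\max}+2)\}$: when $f_{\max}$ is large relative to $\alpha(G)$ there are few support vertices and one may use the whole path $P_1$ rather than its near half, harvesting about $a_1/2$ independent vertices and hence the fractional bound; when $f_{\max}$ is small the integral ``$+1$'' bonus dominates and gives the constant $4$. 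The degenerate configurations that neither mechanism covers are exactly those forbidden by the hypothesis that $G$ has diameter at least $3$ or $\alpha(G)\ge 3$.

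For the equality characterization, the ``if'' direction is a direct computation: on $\mathcal{G}_0(k,\ell)$ one checks $\alpha(G)=k\ell+1$ with an optimal broadcast of weight $4k\ell$ attained by assigning $4k$ to each of the $\ell$ apices (so $f_{\max}=4k$), and on $\mathcal{G}_2(k)$ one checks $\alpha(G)=2k+2$ with weight $8k+4$ attained by assigning $4k+2$ to both apices (so $f_{\max}=4k+2$); in both cases the right-hand side of \eqref{e2} equals $4\alpha(G)-4$. The main obstacle is the ``only if'' direction: equality forces every estimate above to be tight at once, and I must convert this into rigid structure — each $|I_i|=a_i/4$ (pinning down $a_i\bmod 4$), each triangle inequality tight (forcing the geodesics $P_i$ and the neighbourhoods of the $x_i$ into a common layered pattern), and all far ends to collapse into one clique. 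Turning these local rigidities into the global partition into $2k$- or $(2k+1)$-strips glued along a terminal clique, and ruling out any additional edges, is the technical heart of the argument; I expect the hardest part to be showing that tightness of the distance estimates propagates to the \emph{complete}-join conditions at even levels that define a strip, rather than merely to the existence of some connecting edges.
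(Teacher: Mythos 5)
Your half-path construction is sound and cleanly proves the crude bound $\alpha_b(G)\le 4\alpha(G)$: capping the selection at distance $\left\lfloor (a_i-1)/2\right\rfloor$ forces every cross-distance to be at least $2$, and this is in essence the paper's half-ball construction with the boundary layer discarded. Your first slack mechanism (per-vertex surplus $3,2,1,0$ according to $a_i\equiv 1,2,3,0\pmod 4$) even recovers the paper's Case 1 bound $\alpha_b\le 4\left(1-\frac{2}{f_{\max}+2}\right)\alpha$ whenever every $a_i\equiv 1,2\pmod 4$, since then $4\alpha(G)-W\ge 2|X|\ge 2W/f_{\max}$. The genuine gap is the case where some $a_i\equiv 0$ or $3\pmod 4$, and there your second mechanism fails: it is simply false that the vertices of $P_i$ beyond distance $\lfloor(a_i-1)/2\rfloor$, plus far-away vertices, can be appended to $I$ without creating edges -- and it fails precisely on the extremal family ${\cal G}_0$ it is meant to explain. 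Take $G\in{\cal G}_0(k,\ell)$ with the optimal broadcast assigning $4k$ to each apex $x_i$. A geodesic $P_i$ of length $4k$ must cross the central clique and descend into another strip; its vertex at distance $2k+2$ from $x_i$ lies in $B^j_{2k-1}$ for some $j\ne i$, which is completely joined to $B^j_{2k-2}$, and $B^j_{2k-2}$ contains a vertex of $I_j$. So even appending only every other far vertex of a single path destroys independence. What is actually true -- and is the technical core of the paper's proof -- is that a \emph{single} extra vertex can be added: a carefully chosen neighbour $y_3$ of one boundary vertex when $X_3\ne\emptyset$ and $X_0=\emptyset$ (Case 2 of the paper), or the boundary vertex $p(x_0)$ of one chosen $x_0\in X_0$ (Case 3), and proving that this one addition preserves independence requires deciding, according to $f(x)\bmod 4$, which boundary vertices enter which $I(x)$, and then verifying a list of distance estimates that exploit exactly which residue classes are empty. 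None of this case analysis appears in your sketch; ``comparing the better of these bonuses against $f_{\max}$'' is a description of the desired conclusion, not an argument.

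The equality characterization is likewise not established. For the ``if'' direction, equality in (\ref{e2}) is a statement about $f_{\max}$ of an \emph{arbitrary} optimal broadcast, so you must show that every optimal broadcast on a graph in ${\cal G}_0(k,\ell)$ or ${\cal G}_2(k)$ has $f_{\max}=4k$ resp.\ $4k+2$; this is the uniqueness assertion proved in the paper's Lemma \ref{lemma1}, and ``one checks'' does not cover it. For the ``only if'' direction you correctly list the rigidity facts that tightness should yield, but you explicitly defer their derivation; that derivation is where the paper spends its Cases A and B, repeatedly exploiting the freedom in the choice of $p(x)$ within $\partial B(x)$ and of the distinguished vertex $x_0$ to manufacture independent sets of size $|I|+1$, thereby forcing each level $B_i(x)$ to be a clique, forcing the complete joins at even levels, forcing $R$ and the union of boundaries to form cliques completely joined to each other, and finally ruling out any additional edge by an eccentricity/diameter argument. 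In sum, your proposal proves the factor-$4$ bound and correctly matches each extremal family to a source of slack, but neither the refined inequality (\ref{e2}) nor its equality characterization is proved, and the one general claim your refinement rests on is false.
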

The assumption that $G$ has diameter at least $3$ 
or $\alpha(G)\geq 3$
excludes some trivial cases; 
suppose that a nonempty connected graph $G$ has
diameter at most $2$ and $\alpha(G)\leq 2$.  If $\alpha(G)=1$, then
$G$ is a clique, which implies $\alpha_b(G)=\alpha(G)$, and, if
$\alpha(G)=2$, then (B1) and (B2) imply $\alpha_b(G)=2$, that is, both
parameters are equal in these cases.

\section{Proofs}

For the proof of Theorem \ref{theorem1}, we need some properties of
the graphs in ${\cal G}_0\cup {\cal G}_2$.

\begin{lemma}\label{lemma1}
Let $k$ and $\ell$ be positive integers with $\ell\geq 2$.
\begin{enumerate}[(i)]
\item If $G\in {\cal G}_2(k)$, then 
$\alpha(G)=2k+2$, 
$\alpha_b(G)=8k+4$, and
$\max\{ f(x):x\in V(G)\}=4k+2$
for every optimal independent broadcast $f$ on $G$.
\item If $G\in {\cal G}_0(k,\ell)$, then 
$\alpha(G)=k\ell+1$,
$\alpha_b(G)=4k\ell$, and
$\max\{ f(x):x\in V(G)\}=4k$
for every optimal independent broadcast $f$ on $G$.
\end{enumerate}
\end{lemma}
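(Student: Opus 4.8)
The plan is to treat the two families separately and, for each, establish the three equalities by a clique cover paired with an explicit independent set (for $\alpha$), an explicit broadcast (for the lower bound on $\alpha_b$), and a coordinate/packing argument (for the matching upper bound on $\alpha_b$ and the value of $f_{\max}$). I would begin with $\alpha$. In every strip the even-index joins merge consecutive levels: since $B_{2i}$ and $B_{2i+1}$ are cliques that are completely joined, their union is a clique. For $G\in{\cal G}_2(k)$ this gives a partition of $V(G)$ into the $2k+2$ cliques $B_{2i}^j\cup B_{2i+1}^j$ with $i\in[k]_0$ and $j\in\{1,2\}$, so $\alpha(G)\le 2k+2$; picking one vertex from each even level $B_0^j,B_2^j,\dots,B_{2k}^j$ gives an independent set of that size (chosen vertices lie in levels differing by at least $2$, hence are nonadjacent, and the two strips interact only through $B_{2k+1}^1,B_{2k+1}^2$, which are avoided), so $\alpha(G)=2k+2$. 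For $G\in{\cal G}_0(k,\ell)$ the analogous partition consists of the $k\ell$ cliques $B_{2i}^j\cup B_{2i+1}^j$ with $i\in[k-1]_0$ together with the single clique $C:=R\cup\bigcup_jB_{2k}^j$, giving $\alpha(G)\le k\ell+1$, attained by one vertex from each $B_0^j,\dots,B_{2k-2}^j$ plus one vertex of $C$; hence $\alpha(G)=k\ell+1$.

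Next I would prove the lower bounds on $\alpha_b$ by broadcasts supported on the strip roots. For ${\cal G}_2(k)$ the diameter is $4k+3$, realized by the two roots $x^1,x^2$; assigning $f(x^1)=f(x^2)=4k+2$ satisfies (B1), since $4k+2\le{\rm ecc}_G=4k+3$, and (B2), since ${\rm dist}_G(x^1,x^2)=4k+3>4k+2$, so $\alpha_b(G)\ge 8k+4$. For ${\cal G}_0(k,\ell)$ any two roots are at distance $4k+1$, so assigning $f(x^i)=4k$ to all $\ell$ roots satisfies (B1) and (B2) and gives $\alpha_b(G)\ge 4k\ell$.

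The main work is the matching upper bound together with the claim that $f_{\max}$ is forced for every optimal broadcast. Here I would introduce a $1$-Lipschitz coordinate. For ${\cal G}_0(k,\ell)$ set $g(v)={\rm dist}_G(v,C)$; one checks that $g$ ranges over $[2k]_0$, equals $2k-({\rm level})$ inside a branch, and that ${\rm ecc}_G(v)=g(v)+2k+1$ while, for $u,v$ in different branches, ${\rm dist}_G(u,v)=g(u)+g(v)+1$ and, within a single branch, ${\rm dist}_G(u,v)\le 2k$. For ${\cal G}_2(k)$ the analogue is a backbone coordinate $p(v)\in[4k+3]_0$ obtained by reflecting the second strip, again $1$-Lipschitz in the distance. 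Given an optimal broadcast $f$ with positive set $T$ and maximum value $M=f_{\max}$ attained at $u^*$, condition (B2) forces every other vertex of $T$ to lie at distance more than $M$ from $u^*$; combined with (B1) through the eccentricity formula and with the fact that distances are governed by $g$ (respectively $p$), this bounds how many vertices of $T$ can carry large values and reduces $\sum_{v\in T}f(v)$ to a one-dimensional packing problem on a spider with $\ell$ legs of length $2k$ (respectively a path of length $4k+3$). Carrying this out yields $\alpha_b(G)\le 4k\ell$ (respectively $8k+4$), and tracing the equality cases of the packing shows that every optimal broadcast attains maximum value exactly $4k$ (respectively $4k+2$), which is the asserted value of $f_{\max}$.

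I expect the obstacle to be precisely this tight upper bound and its equality analysis. The partial (odd-index) joins mean that within-branch distances need not equal level differences, so the reduction to the spider/path skeleton must use that such extra edges only increase distances and hence only relax (B2), never the upper bound; and ruling out optimal broadcasts with $|T|\ge 3$ (for ${\cal G}_2$) or with all values below $4k$ (for ${\cal G}_0$) requires a careful, though elementary, inspection of the one-dimensional packing to show that spreading the weight over more, smaller broadcasts strictly decreases the total.
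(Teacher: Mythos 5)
Your treatment of $\alpha(G)$ (the clique cover by the sets $B_{2i}\cup B_{2i+1}$ together with the central clique, matched by an explicit independent set) and your lower bounds on $\alpha_b(G)$ via broadcasts supported on the strip roots are correct, and coincide with what the paper does. However, the heart of the lemma is the matching upper bound on $\alpha_b(G)$ together with the claim that \emph{every} optimal broadcast has maximum value exactly $4k$ (resp.\ $4k+2$), and there your proposal stops at a plan: ``carrying this out yields $\alpha_b(G)\le 4k\ell$'' and ``tracing the equality cases of the packing shows\dots'' are precisely the statements that need proof, and you yourself flag them as ``the obstacle.'' That is a genuine gap, not a missing detail.

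Worse, the sketch of the reduction contains two errors that would derail an attempt to execute it as stated. First, $g(v)={\rm dist}_G(v,C)$ does \emph{not} equal $2k-(\mathrm{level})$: since only the even-to-odd joins are complete, a vertex at odd level $i$ need not have any neighbor at level $i+1$, and for such a vertex $g(v)=2k-i+1$. Consequently within-branch distances are not governed by a $1$-Lipschitz coordinate; the distance between levels $r<s$ can be $s-r+1$. Second, your remedy --- ``such extra edges only increase distances and hence only relax (B2), never the upper bound'' --- has the logic backwards: it is the \emph{missing} join edges that increase within-branch distances relative to the spider skeleton, and larger distances \emph{relax} (B2), i.e.\ they permit more or larger broadcast values and therefore threaten the upper bound rather than preserve it. The paper deals with exactly this point by proving the estimate that the distance between a vertex of $B_r^j$ and a vertex of $B_s^j$ is at most $s-r+1$ (and at most $s-r$ when $r=0$), and running the packing with this slack: (B2) then yields only $i_{q+1}\ge i_q+f(x_q)$ (strengthened to $i_2\ge i_1+f(x_1)+1$ when $i_1=0$), which telescopes to $\sum_{q=1}^{p-1}f(x_q)\le i_p-1$ per branch; a case analysis (whether $f$ is positive somewhere on the central clique, and whether $f(x_p)$ equals the eccentricity) then gives $\alpha_b(G)\le\max\{2k\ell+2k+1,\,4k+1,\,4k\ell\}=4k\ell$, and since the first two quantities are \emph{strictly} below $4k\ell$ (using $\ell\ge 2$), the root broadcast $f^*$ is the unique optimum, which is what forces $f_{\max}=4k$. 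None of this slack-aware packing, the case analysis, or the strict inequalities that give uniqueness appears in your proposal.
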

\begin{proof} 
We only give details for the proof of (ii);
the simpler proof of (i) can be obtained in a similar way.
Let $G\in {\cal G}_0(k,\ell)$. 
Let $H_1,\ldots,H_{\ell}$ be as in the definition of 
${\cal G}_0(k,\ell)$.

Since $B_{2i}^j\cup B_{2i+1}^j$ is a clique 
for every $i$ in $[k-1]_0$ and every $j\in [\ell]$, 
and since
$R\cup \bigcup\limits_{i=1}^{\ell}B_{2k}^i$ is a clique,
we obtain 
$\alpha(G)\leq k\ell+1$.
Since a set containing
one vertex from $B_{2i}^j$ 
for every $i$ in $[k-1]_0$ and every $j\in [\ell]$,
and
one vertex from $B_{2k}^1$ 
is independent,
we obtain $\alpha(G)=k\ell+1$.

Let $f$ be an optimal independent broadcast on $G$.
Let $j$ be an arbitrary index in $[\ell]$.
Let $i_1,\ldots,i_r$ be all indices such that 
$0\leq i_1<\ldots<i_p\leq 2k-1$, and 
$f$ has a positive value on some vertex $x_q$ in $B_{i_q}^j$
for every $q$ in $[p]$.
Since each $B_i^j$ is a clique, 
the vertices $x_1,\ldots,x_p$ are unique.
By the structure of $G$,
the distance between a vertex in $B_r^j$ 
and a vertex in $B_s^j$
for $r$ and $s$ with $r,s\in [2k]_0$ and $r<s$
is at most $s-r+1$, 
and at most $s-r$ if $r=0$.
Therefore, (B2) implies that 
$i_{q+1}\geq i_q+f(x_q)$ for every $q$ in $[p-1]$,
and that 
$i_2\geq i_1+f(x_1)+1$ if $p\geq 2$ and $i_1=0$.
If $p\geq 2$ and $i_1>0$, then
$$\sum\limits_{q=1}^{p-1}f(x_q)
\leq
\sum\limits_{q=1}^{p-1}(i_{q+1}-i_q)
=i_p-i_1\leq i_p-1,$$
and, if $p\geq 2$ and $i_1=0$, then 
$$\sum\limits_{q=1}^{p-1}f(x_q)
\leq
(i_2-i_1-1)+\sum\limits_{q=2}^{p-1}(i_{q+1}-i_q)
=i_p-i_1-1\leq i_p-1,$$
that is, the same bound holds in both cases.

First, we assume that 
$f$ has a positive value on some vertex $x$
in $R\cup \bigcup\limits_{i=1}^{\ell}B_{2k}^i$.
By the structure of $G$, we have $f(x)\leq {\rm ecc}_G(x)\leq 2k+1$. 
(B2) implies 
$f(x_p)\leq {\rm dist}_G(x_p,x)-1\leq 2k-i_p$.
Hence,
$\sum\limits_{q=1}^{p}f(x_q)\leq 2k-1$ if $p\geq 2$,
and 
$\sum\limits_{q=1}^{p}f(x_q)\leq 2k$ if $p=1$ and $i_1=0$.
Since $j$ was chosen arbitrarily, we obtain
$\alpha_b(G)\leq 2k\ell+2k+1.$

Next, we assume that 
$f$ is $0$ on $R\cup \bigcup\limits_{i=1}^{\ell}B_{2k}^i$.
This implies
$f(x_p)\leq {\rm ecc}_G(x_p)=4k-i_p+1\leq 4k+1$.
If $f(x_p)={\rm ecc}_G(x_p)$,
then, by (B2), 
$x_p$ is the only vertex of $G$ with a positive value of $f$,
and, hence, 
$\alpha_b(G)\leq 4k+1.$
If $f(x_p)\leq 4k-i_p$, then 
$\sum\limits_{q=1}^{p}f(x_q)\leq 4k-1$ if $p\geq 2$,
and 
$\sum\limits_{q=1}^{p}f(x_q)\leq 4k$ if $p=1$ and $i_1=0$.
Since $j$ was chosen arbitrarily, we obtain
$\alpha_b(G)\leq 4k\ell$.
Altogether, we obtain
$$\alpha_b(G)\leq \max\{ 2k\ell+2k+1,4k+1,4k\ell\}=4k\ell.$$
Since the function $f^*$
that has value $4k$ on every vertex in
$\bigcup\limits_{i=1}^{\ell}B_0^i$
and value $0$ everywhere else
is an independent broadcast on $G$
of weight $4k\ell$,
we conclude
$$\alpha_b(G)=4k\ell.$$
Since $\max\{ 2k\ell+2k+1,4k+1\}<4k\ell$,
the above arguments actually imply that $f^*$
is the unique optimal broadcast on $G$,
which completes the proof.
\end{proof}
We are now in a position to prove our main result.

\begin{proof}[Proof of Theorem \ref{theorem1}]
Let $X=\{ x\in V(G):f(x)>0\}$.
For every vertex $x$ in $X$ and every nonnegative integer $i$, 
let 
\begin{eqnarray*}
B_i(x) & = & \big\{ y\in V(G):{\rm dist}_G(x,y)=i\big\},\\
B(x)&=&\bigcup\limits_{i=0}^{\left\lfloor\frac{f(x)}{2}\right\rfloor}B_i(x),\\
\partial B(x)&=&B_{\left\lfloor\frac{f(x)}{2}\right\rfloor}(x)\mbox{, and }\\
R &=& V(G)\setminus \bigcup\limits_{x\in X}B(x).
\end{eqnarray*}
If there are two distinct vertices $x$ and $x'$ in $X$ 
such that the sets $B(x)$ and $B(x')$ intersect,
then 
$${\rm dist}_G(x,x')\leq \frac{f(x)}{2}+\frac{f(x')}{2}
\leq \max\{ f(x),f(x')\},$$
which contradicts (B2). 
Hence, 
$$\mbox{the sets $B(x)$ for $x$ in $X$ are disjoint.}$$
Note that no vertex $y$ in $B(x)\setminus \partial B(x)$
has a neighbor outside of $B(x)$.
For every $x$ in $X$, let $p(x)$ be an arbitrary vertex in $\partial B(x)$,
and let $P(x)$ be a shortest path in $G$ between $x$ and $p(x)$.
Note that $P(x)$ has order $\left\lfloor\frac{f(x)+2}{2}\right\rfloor$,
that $x$ and $p(x)$ coincide if and only if $f(x)=1$,
and that $p(x)$ is the only vertex on $P(x)$ 
that may have neighbors outside of $B(x)$.

For $i\in \{ 0,1,2,3\}$, let $X_i=\{ x\in X:f(x)\mod 4\equiv i\}$.
For every $x$ in $X_0\cup X_1$, 
the path $P(x)$ contains a unique independent set $I(x)$ 
of order $\left\lfloor\frac{f(x)+4}{4}\right\rfloor$
that contains $p(x)$,
and for every $x$ in $X_2\cup X_3$, 
the path $P(x)$ contains a unique independent set $I(x)$ 
of order $\left\lfloor\frac{f(x)+2}{4}\right\rfloor$
that does not contain $p(x)$.
The next table summarizes the different cases.

\renewcommand{\arraystretch}{1.5}
\begin{table}[H]
  \centering
  \begin{tabular}{c|c|c|c|l}
    $f(x)\mod 4$ & $\left\lfloor\frac{f(x)}{2}\right\rfloor \mod 2$ &
    $|P(x)|\mod 2$ & $|P(x)|$ & $|I(x)|$\\
    \hline
    \hline
    0 & 0 & 1 & $\frac{f(x)+2}{2}$ & $\frac{f(x)+4}{4}$
    (and $I(x)$ contains $p(x)$)\\
    \hline
    1 & 0 & 1 & $\frac{f(x)+1}{2}$ & $\frac{f(x)+3}{4}$
    (and $I(x)$ contains $p(x)$)\\
    \hline
    2 & 1 & 0 & $\frac{f(x)+2}{2}$ & $\frac{f(x)+2}{4}$
    (and $I(x)$ does not contain $p(x)$)\\
    \hline
    3 & 1 & 0 & $\frac{f(x)+1}{2}$ & $\frac{f(x)+1}{4}$
    (and $I(x)$ does not contain $p(x)$)\\
  \end{tabular}
  \caption{Values of different parameters according to $f(x)\mod 4$.}
  \label{tab:values}
\end{table}
We consider three cases.

\bigskip

\noindent {\bf Case 1} $X_0=X_3=\emptyset$.

\bigskip

\noindent Let $I=\bigcup\limits_{x\in X}I(x)$.
Suppose, for a contradiction, that $I$ is not independent.
Since $I(x)$ contains $p(x)$ only if $x$ belongs to $X_1$,
it follows that there are two distinct vertices $x$ and $x'$ in $X_1$
such that $p(x)$ is adjacent to $p(x')$.
Now, 
$${\rm dist}_G(x,x')\leq |P(x)|+|P(x')|-1 \le \frac{f(x)+1}{2}+\frac{f(x')+1}{2}-1
\leq \max\{ f(x),f(x')\},$$
which contradicts (B2). 
Hence, $I$ is independent.
Since $X=X_1\cup X_2$ using Table~\ref{tab:values} we obtain
$$|I(x)| \geq \frac{f(x)+2}{4}$$ for every $x$ in $X$.  Since
$f_{\max}\cdot |X|\geq \alpha_b(G)$, we obtain
\begin{eqnarray}\label{e7}
\alpha(G)\geq |I|=\sum\limits_{x\in X}|I(x)|
\geq \sum\limits_{x\in X}\frac{f(x)+2}{4}
=\frac{1}{4}\left(\alpha_b(G)+2|X|\right)
\geq \frac{1}{4}\alpha_b(G)\left(1+\frac{2}{f_{\max}}\right),
\end{eqnarray}
and, hence, 
\begin{eqnarray}\label{e4}
\alpha_b(G)\leq 4\left(1-\frac{2}{f_{\max}+2}\right)\alpha(G).
\end{eqnarray}

\bigskip

\noindent {\bf Case 2} $X_0=\emptyset$ and $X_3\not=\emptyset$.

\bigskip 

\noindent Let $x_3$ be some vertex in $X_3$.
By (B1), we may assume that $p(x_3)$ is chosen in such a way
that it has a neighbor $y_3$ outside of $B(x_3)$.
Suppose, for a contradiction,
that $y_3$ belongs to $B(x)$ for some $x$ in $X$.
If 
$f(x_3)\geq f(x)$, then
$${\rm dist}_G(x_3,x)\leq |P(x_3)|+|P(x)|-1
\leq \left\lfloor\frac{f(x_3)+2}{2}\right\rfloor+\left\lfloor\frac{f(x)+2}{2}\right\rfloor-1
\leq \frac{f(x_3)+1}{2}+\frac{f(x_3)+1}{2}-1
=f(x_3),$$
which contradicts (B2),
and, if 
$f(x_3)<f(x)$, then $X_0=\emptyset$ implies $f(x_3)\leq f(x)-2$, 
and, hence,
$${\rm dist}_G(x_3,x)\leq |P(x_3)|+|P(x)|-1
\leq \left\lfloor\frac{f(x_3)+2}{2}\right\rfloor+\left\lfloor\frac{f(x)+2}{2}\right\rfloor-1
\leq \left\lfloor\frac{f(x)}{2}\right\rfloor+\left\lfloor\frac{f(x)+2}{2}\right\rfloor-1
\leq f(x),$$
which contradicts (B2).
Hence
$$y_3\in R.$$ Let $I=\{ y_3\}\cup\bigcup\limits_{x\in X}I(x)$.
Suppose, for a contradiction, that $I$ is not independent.  In view of
the argument in Case 1, it follows that $y_3$ is adjacent to a vertex
$p(x)$ for some $x$ in $X$. As $p(x)$ has a neighbor outside of
$B(x)$, we have  $x\in X_0\cup X_1=X_1$ in this case.
If $f(x_3)\geq f(x)$, then $f(x)\leq f(x_3)-2$, and
$${\rm dist}_G(x_3,x)\leq |P(x_3)|+|P(x)|
\leq \frac{f(x_3)+1}{2}+\frac{f(x)+1}{2}
\leq \frac{f(x_3)+1}{2}+\frac{f(x_3)-1}{2}
=f(x_3),$$
which contradicts (B2), and,
if $f(x_3)\leq f(x)$, then $f(x_3)\leq f(x)-2$, and 
$${\rm dist}_G(x_3,x)\leq |P(x_3)|+|P(x)|
\leq \frac{f(x_3)+1}{2}+\frac{f(x)+1}{2}
\leq \frac{f(x)-1}{2}+\frac{f(x)+1}{2}
=f(x),$$ 
which contradicts (B2).
Hence, $I$ is independent.
Since $X_0=\emptyset$, by Table~\ref{tab:values} we obtain
$$|I(x)|\geq \frac{f(x)+1}{4}$$
for every $x$ in $X$.
As before, $f_{\max}\cdot |X|\geq \alpha_b(G)$, and, hence,
$$
\alpha(G)\geq 1+\sum\limits_{x\in X}|I(x)|
\geq 1+\sum\limits_{x\in X}\frac{f(x)+1}{4}
=1+\frac{1}{4}\left(\alpha_b(G)+|X|\right)
\geq 1+\frac{1}{4}\alpha_b(G)\left(1+\frac{1}{f_{\max}}\right),
$$
which implies
\begin{eqnarray}\label{e5}
\alpha_b(G)\leq 4\left(1-\frac{1}{f_{\max}+1}\right)(\alpha(G)-1).
\end{eqnarray}

\bigskip

\noindent {\bf Case 3} $X_0\not=\emptyset$.

\bigskip 

\noindent Let $x_0$ be some vertex in $X_0$,
and let
$$I=I(x_0)\cup \bigcup\limits_{x\in X_0\setminus \{
  x_0\}}I(x)\setminus \{ p(x)\} \bigcup\limits_{x\in \cup X_1\cup
  X_2\cup X_3}I(x).$$ Exactly as in Case 1, it follows that
$I\setminus \{ p(x_0)\}$ is independent.  Suppose, for a
contradiction, that $I$ itself is not independent.  This implies that
the vertex $p(x_0)$, which lies in $I(x_0)$, is adjacent to a vertex
$p(x)$ for some $x$ in $X$.  As $p(x)\in I$ and $p(x)$ has a neighbor
outside of $B(x)$, we have $x\in X_1$.  So if $f(x)\geq f(x_0)$, then
$f(x_0)\leq f(x)-1$ and, hence,
$${\rm dist}_G(x_0,x)\leq |P(x)|+|P(x_0)|-1 \leq \frac{f(x)+1}{2}+\frac{f(x_0)+2}{2}-1
\leq \frac{f(x)+1}{2}+\frac{f(x)+1}{2}-1=f(x),$$
which contradicts (B2),
and, 
if $f(x)\leq f(x_0)$, then $f(x)\leq f(x_0)-3$ and, hence,
$${\rm dist}_G(x_0,x)\leq |P(x)|+|P(x_0)|-1 \leq \frac{f(x)+1}{2}+\frac{f(x_0)+2}{2}-1
\leq \frac{f(x_0)-2}{2}+\frac{f(x_0)+2}{2}-1<f(x_0),$$
which again contradicts (B2).
Hence, $I$ is independent.
Since
$|I(x)\setminus \{ p(x)\}|=\frac{f(x)}{4}$ for $x$ in $X_0$, and
$|I(x)|>\frac{f(x)}{4}$ for $x$ in $X\setminus X_0$, 
we obtain
\begin{eqnarray}
\alpha(G)\geq |I|
=1
+\sum\limits_{x\in X_0}|I(x)\setminus \{ p(x)\}|
+\sum\limits_{x\in X\setminus X_0}|I(x)|
\geq 1+\sum\limits_{x\in X}\frac{f(x)}{4}
=1+\frac{\alpha_b(G)}{4},\label{e3}
\end{eqnarray}
and, hence, 
\begin{eqnarray}\label{e6}
\alpha_b(G)\leq 4\alpha(G)-4.
\end{eqnarray}
Note that the inequality (\ref{e6}) is always strictly weaker 
than the inequality (\ref{e5}), and hence, 
the three inequalities (\ref{e4}), (\ref{e5}), and (\ref{e6})
together imply (\ref{e2}).

\bigskip

\noindent We proceed to the characterization of the extremal graphs.
Lemma \ref{lemma1} implies that all graphs in 
${\cal G}_0\cup {\cal G}_2$ satisfy (\ref{e2}) with equality.
Now, let $G$ and $f$ be such that (\ref{e2}) holds with equality.
Since equality in (\ref{e2}) can not be achieved in Case 2, 
either Case 1 or Case 3 applies to $G$.

We consider two cases.

\bigskip

\noindent {\bf Case A} {\it Either $2\alpha(G)>f_{\max}+2$,
or $2\alpha(G)\leq f_{\max}+2$ and Case 3 applies to $G$.}

\bigskip 

\noindent Since 
$2\alpha(G)>f_{\max}+2$ implies
$4\left(1-\frac{2}{f_{\max}+2}\right)\alpha(G)<4\alpha(G)-4$,
necessarily Case 3 applies to $G$, 
and we use the notation from that case.
It follows that (\ref{e6}), and, hence, also (\ref{e3}) hold with equality.
Since $|I(x)|>\frac{f(x)}{4}$ for $x$ in $X\setminus X_0$, 
this implies 
$$X=X_0.$$
We may assume that $x_0$ was chosen such that $f(x_0)=f_{\max}$.

If $f(x_1)<f_{\max}$ for some $x_1$ in $X$, 
then $f(x_1)\leq f(x_0)-4$.
Suppose, for a contradiction, that $p(x_0)$ and $p(x_1)$ are adjacent.
In this case 
$${\rm dist}_G(x_0,x_1)\leq \frac{f(x_0)+4}{4}+\frac{f(x_1)+4}{4}-1
\leq \frac{f(x_0)+4}{4}+\frac{f(x_0)}{4}-1
=f(x_0),$$
which contradicts (B2). 
Hence, $I\cup \{ p(x_1)\}$ is independent, 
which implies the contradiction $\alpha(G)>|I|$.
Hence, 
$$f(x)=f_{\max} \mbox{ for every $x$ in $X$.}$$
Let the integer $k$ be such that $f_{\max}=4k$.

If there is some $x$ in $X$ such that $\partial B(x)$ contains 
two nonadjacent vertices $p$ and $p'$, 
then $(I\setminus \{ p(x_0)\})\cup \{ p,p'\}$
is independent, 
which implies the contradiction $\alpha(G)>|I|$.
Hence, $\partial B(x)$ is a clique for every $x$ in $X$.
If there are two distinct vertices $x$ and $x'$ in $X$
for which $p(x)$ and $p(x')$ are not adjacent,
then $(I\setminus \{ p(x_0)\})\cup \{ p(x),p(x')\}$
is independent, 
which implies the contradiction $\alpha(G)>|I|$.
Since $p(x)$ was an arbitrary vertex in $\partial B(x)$,
it follows that
$$\bigcup\limits_{x\in X}\partial B(x)\mbox{ is a clique.}$$
Since $G$ has diameter at least $3$ or $\alpha(G)\geq 3$,
and $f$ is an optimal broadcast on $G$,
it follows that 
$$|X|\geq 2.$$
If $R$ is not a clique, 
then adding two nonadjacent vertices from $R$
to $I\setminus \{ p(x_0)\}$ yields an independent set, 
which implies the contradiction $\alpha(G)>|I|$.
Hence,
$$\mbox{$R$ is a clique}.$$
If some vertex $p$ in $\bigcup\limits_{x\in X}\partial B(x)$
is not adjacent to some vertex $y$ in $R$,
then we may assume that 
$x_0$ and $p(x_0)$ have been chosen such that $p(x_0)=p$,
and $I\cup \{ y\}$ is independent,
which implies the contradiction $\alpha(G)>|I|$.
Hence,
$$\mbox{$R$ is completely joined to $\bigcup\limits_{x\in X}\partial B(x)$}.$$
Let $x$ be an arbitrary vertex in $X$, 
and let $H=G[B(x)\setminus \partial B(x)]$.
Recall that 
$$B(x)\setminus \partial B(x)
=B_0(x)\cup \ldots \cup B_{2k-1}(x),$$
that $B_0(x)$ contains only $x$,
and that there are no edges between $B_i(x)$ and $B_j(x)$
if $|j-i|\geq 2$.

If $\alpha(H)>k$,
then we may assume that $x_0$ is distinct from $x$,
and adding 
a maximum independent set in $H$
to the set $I\setminus (I(x)\setminus \{ p(x)\})$
yields an independent set in $G$,
which implies the contradiction $\alpha(G)>|I|$.
Hence,
$$\alpha(H)=k.$$
If $B_i(x)$ is not a clique for some $i$ in $\left[2k-1\right]$, 
then a set containing 
\begin{itemize}
\item two nonadjacent vertices from $B_i(x)$, and 
\item one vertex from $B_j(x)$ for every $j$ in $\left[2k-1\right]_0$
such that $j$ and $i$ have the same parity modulo $2$
\end{itemize}
is an independent set in $H$ with more than $k$ vertices, 
which is a contradiction.
Hence, 
$$B_i(x)\mbox{ is a clique for every $i$ in $\left[2k-1\right]_0$.}$$
If there is an even integer $i$ in $\left[2k-1\right]$
such that some vertex $x$ in $B_i(x)$ 
is not adjacent to some vertex $x'$ in $B_{i+1}(x)$,
then a set 
\begin{itemize}
\item containing $x$ and $x'$,
\item one vertex from $B_j(x)$ for every even $j$ 
in $\left[2k-1\right]_0$ less than $i$,
and 
\item one vertex from $B_j(x)$ for every odd $j$ 
in $\left[2k-1\right]_0$ larger than $i+1$
\end{itemize}
is an independent set in $H$ with more than $k$ vertices, 
which is a contradiction.
Hence,
$$\mbox{$B_{2i}(x)$ is completely joined to $B_{2i+1}(x)$
for every $i$ in $[k-1]_0$.}$$
Since $x$ was an arbitrary vertex in $X$, 
at this point it follows 
that $G$ contains a graph $G_0$ from ${\cal G}_0(k,\ell)$ 
with $\ell=|X|$ as a spanning subgraph.
Since
adding any further edge $e$ to $G_0$ 
such that $G_0+e\not\in {\cal G}_0(k,\ell)$
results in a graph 
that has less than $\ell$ vertices of eccentricity $f_{\max}=4k$,
we obtain $G\in {\cal G}_0(k,\ell)$,
which completes the proof
in this case.

\bigskip

\noindent {\bf Case B} {\it $2\alpha(G)\leq f_{\max}+2$ and Case 1 applies to $G$.}

\bigskip 

\noindent We use the notation from Case 1.
Since $4\left(1-\frac{2}{f_{\max}+2}\right)\alpha(G)\geq 4\alpha(G)-4$,
it follows that (\ref{e4}), and, hence, also (\ref{e7}) hold with equality.
This implies $f_{\max}\cdot |X|=\alpha_b(G)$, and, hence,
$$f(x)=f_{\max} \mbox{ for every $x$ in $X$.}$$
Furthermore,
since $|I(x)|>\frac{f(x)+2}{4}$ for $x$ in $X_1$, 
equality in (\ref{e7}) implies 
$$X=X_2.$$
Let the integer $k$ be such that $f_{\max}=4k+2$.

As in Case A, we have $|X|\geq 2$.
If $|X|\geq 3$, then, by (\ref{e7}), 
$\alpha(G)\geq 3(\frac{f_{\max}+2}{4})\geq 3(k+1)$, and, hence,
$2\alpha(G)\geq 6k+6>4k+4=f_{\max}+2$.
Hence,
$$|X|=2.$$
If $R$ is not empty, then adding a vertex from $R$ to $I$
yields an independent set, 
which implies the contradiction $\alpha(G)>|I|$.
Hence,
$$\mbox{$R$ is empty}.$$
Let $X=\{ x_1,x_2\}$,
and let $B_i^j=B_i(x_j)$ for every $i$ in $[2k+1]_0$ and $j$ in $[2]$,
cf. the definition of the graphs in ${\cal G}_2(k)$.
Arguing similarly as in Case A, we obtain
that
$$\mbox{$B^j_i$ is a clique for every $i$ in $[2k+1]_0$ and $j$ in $[2]$,}$$
and that 
$$\mbox{$B^j_{2i}$ is completely joined to $B^j_{2i+1}$
for every $i$ in $[k]_0$ and $j$ in $[2]$.}$$
Since $G$ is connected,
$$\mbox{there are some edges between $B_{2k+1}^1$ and $B_{2k+1}^2$.}$$
Again, it follows that $G$ contains a graph $G_2$ from ${\cal G}_2(k)$
as a spanning subgraph.  Since adding any further edge $e$ to $G_2$
such that $G_2+e\not\in {\cal G}_2(k)$ results in a graph of diameter
less than $4k+3$, we obtain $G\in {\cal G}_2(k)$, which completes the
proof.
\end{proof}

\end{document}